\theoremstyle{plain}
\newtheorem{theorem}{Theorem}
\newtheorem{corollary}[theorem]{Corollary}
\theoremstyle{remark}
\newcommand{\C}{\mathbb{C}}
\newcommand{\R}{\mathbb{R}}
\newcommand*{\Hom}{\ensuremath{\mathrm{Hom\,}}}
\begin{document}

\title{Local polynomials and the Montel theorem}
\author{J.~M.~Almira, L.~Sz\'ekelyhidi}

\let\thefootnote\relax\footnotetext{The research was supported by the Hungarian National Foundation for Scientific Research (OTKA),   Grant No. NK-81402.}

\subjclass[2010]{Primary 43B45, 39A70; Secondary 39B52.}

\keywords{Polynomials on Abelian Groups, Fr\'{e}chet's functional equation, Montel's  Theorem.}

\maketitle
\begin{abstract}
In this paper local polynomials on Abelian groups are characterized by a "local" Fr\'echet--type functional equation. We apply our result to generalize Montel's Theorem and to obtain Montel--type theorems on commutative groups.
\end{abstract}

\section{Introduction}
Polynomials on commutative groups play a basic role in functional equations and in spectral synthesis. The most common definition of polynomial functions depends on Fr\'echet's Functional Equation (see \cite{Fre09, MazOrl34, MR0001560}). Given a commutative group $G$ we denote by $\C G$ the {\it group algebra} of $G$, which is the algebra of all finitely supported complex valued functions defined on $G$. Besides the linear operations (addition and multiplication by scalars) the multiplication is defined by convolution
\begin{equation*}
\mu*\nu(x)=\sum_{y\in G} \mu(x-y)\nu(y)
\end{equation*}
for each $x$ in $G$. With these operations $\C G$ is a commutative complex algebra with identity $\delta_o$, where $o$ is the zero element in $G$, and for each $y$ in $G$ we use the notation $\delta_y$ for the characteristic function of the singleton $\{y\}$. Elements of the form  
\begin{equation*}
\Delta_y=\delta_{-y}-\delta_0
\end{equation*}
of this algebra with $y$ in $G$ are called {\it differences}.
\vskip.3cm

Using the notation $\mathcal C(G)$ for the linear space of all complex valued functions on $G$, it is a module over $\C G$ with the obvious definition
\begin{equation*}
\mu*f(x)=\sum_{y\in G} f(x-y)\mu(y)
\end{equation*}
for each $x$ in $G$. For every function $f$ in the space $\mathcal C(G)$ we shall use the notation $\widehat{f}(x)=f(-x)$, whenever $x$ is in $G$. 
\vskip.3cm

Given a subset $V$ in $\mathcal C(G)$ the {\it annihilator} $V^{\perp}$ of $V$ is the set of all $\mu$'s in $\C G$, for which $\mu*f=0$ for each $f$ in $V$. 
The dual concept is the annihilator $I^{\perp}$ of a subset $I$ in $\C G$: it is the set of all functions $f$ in $\mathcal C(G)$ satisfying $\mu*f=0$ for each $\mu$ in $I$. 

\vskip.3cm

The study of polynomials is related to the study of the annihilators of ideals in $\C G$ generated by products of differences. More exactly, the function $f:G\to\C$ is called a {\it generalized polynomial} of degree at most $n$, if $n$ is a natural number and
\begin{equation}\label{Frech1}
\Delta_{y_1,y_2,\dots,y_{n+1}}*f=0\,,
\end{equation}
where we use the notation $\Delta_{y_1,y_2,\dots,y_{n+1}}$ for the convolution product 
\begin{equation*}
\Delta_{y_1}*\Delta_{y_2}*\dots*\Delta_{y_{n+1}}\,.
\end{equation*}
The smallest $n$ with this property is called the {\it degree} of $f$. In \cite{MR0265798} Djokovi\v c proved that condition \eqref{Frech1}, which is called {\it Fr\'echet's Functional Equation}, is equivalent to the condition
\begin{equation}\label{Frech2}
\Delta_y^{n+1}*f=0\,,
\end{equation}
where $\Delta_y^{n+1}=\Delta_{y_1,y_2,\dots,y_{n+1}}$ with $y=y_1=y_2=\dots=y_{n+1}$. We note that sometimes \eqref{Frech2} is also called Fr\'echet's Functional Equation. 
\vskip.3cm

Polynomials of degree at most one, which vanish at zero, are called {\it additive functions}. They are characterized by the equation
\begin{equation*}
a(x+y)=a(x)+a(y)\,,
\end{equation*}
that is, they are exactly the homomorphisms of $G$ into the additive group of complex numbers. All additive functions on $G$ form a linear space, which is denoted by $\Hom(G,\C)$.
\vskip.3cm

There is a vast literature on different types of polynomials, which play a basic role in the theory of functional equations. In \cite{MR2165414} M.~Laczkovich studies the relations of diverse concepts of polynomials. The reader will find further references and results in this respect in \cite{MazOrl34, MR2582364, MR954205, MR2433311, MR1113488,  MR0001560}.
\vskip.3cm

A special class of generalized polynomials is formed by those functions, which belong to the function algebra generated by the additive functions and the constants. These functions are simply called {\it polynomials}. Hence the general form of a polynomial is
\begin{equation}\label{poly}
p(x)=P\bigl(a_1(x),a_2(x),\dots,a_n(x)\bigr)
\end{equation}
whenever $x$ in $G$, where the functions $a_1,a_2,\dots,a_n:G\to\C$ are additive, and \hbox{$P:\C^n\to\C$} is an ordinary complex polynomial in $n$ variables. In the case $G=\R^n$ or $G=\C^n$ it is well-known (see e.g. \cite{MR1113488}), that every continuous generalized polynomial is a polynomial, in fact, it is an ordinary polynomial. In particular, in this case the additive functions in \eqref{poly} are continuous, assuming that they are linearly independent, which we may always suppose.
In this paper we use the term "ordinary polynomial" for continuous complex valued polynomials on $\R^n$, \hbox{or on $\C^n$.}
\vskip.3cm

The following theorem holds true (see e.g. \cite[Theorem 2. and Theorem 3.]{MR2167990}, \cite[Theorem 4.]{MR2968200}).

\begin{theorem}\label{genpol}
Let $G$ be an Abelian group. Every generalized polynomial on $G$ is a polynomial if and only if the dimension of $\Hom(G,\C)$ is finite.
\end{theorem}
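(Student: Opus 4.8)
The plan is to prove both implications through the decomposition of generalized polynomials into homogeneous terms. Recall from the theory built on \eqref{Frech1} (Djokovi\v c's analysis, see \cite{MR0265798, MR1113488}) that every generalized polynomial $f$ of degree at most $n$ splits uniquely as $f=\sum_{k=0}^{n}f_k$, where each $f_k$ is a \emph{monomial} of degree $k$, that is $f_k(x)=A_k(x,\dots,x)$ for a uniquely determined symmetric $k$-additive function $A_k\colon G^k\to\C$ recovered from $f$ by iterated difference operators (the $k$-th polar form). The observation that links the two notions, and which I would isolate as a lemma, is that a monomial $f_k=A_k(x,\dots,x)$ is a polynomial in the sense of \eqref{poly} if and only if $A_k$ has \emph{finite rank}, i.e.\ $A_k$ is a finite sum of products $\prod_{j=1}^{k}b_{i,j}$ of additive functions. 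One direction is immediate; for the converse I would compare homogeneous parts: if $A_k(x,\dots,x)=P(a_1(x),\dots,a_m(x))$, then by uniqueness of the decomposition $A_k$ must equal the polarization of the degree-$k$ part of $P$, which is visibly of finite rank. This reduces the whole theorem to the statement that every multiadditive function on $G$ has finite rank if and only if $\dim\Hom(G,\C)<\infty$.

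For the \emph{if} direction, assume $\Hom(G,\C)$ has finite dimension $d$ with basis $a_1,\dots,a_d$, and show by induction on $k$ that every $k$-additive function $A_k$ has finite rank. For $k=1$ one has $A_1\in\Hom(G,\C)$, so $A_1$ is a linear combination of the $a_i$. For the inductive step, freezing the last $k-1$ variables makes $x_1\mapsto A_k(x_1,x_2,\dots,x_k)$ additive, whence $A_k(x_1,\dots,x_k)=\sum_{i=1}^{d}a_i(x_1)\,c_i(x_2,\dots,x_k)$; linear independence of the $a_i$ forces each coordinate function $c_i$ to be $(k-1)$-additive, and the induction hypothesis makes each $c_i$ of finite rank. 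Hence every $A_k$, every $f_k$, and therefore every generalized polynomial, is a polynomial.

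For the \emph{only if} direction I argue contrapositively and exhibit a degree-$2$ generalized polynomial that is not a polynomial. Since $\C$ is torsion-free and divisible, every additive function factors through $G/\mathrm{tors}$ and extends $\mathbb{Q}$-linearly, giving $\Hom(G,\C)\cong\Hom_{\mathbb{Q}}(W,\C)$ with $W:=G\otimes\mathbb{Q}$; thus $\dim_{\C}\Hom(G,\C)=\infty$ forces $W$ to be infinite-dimensional over $\mathbb{Q}$. Fixing a $\mathbb{Q}$-basis $\{e_i\}_{i\in I}$ of $W$ with $I$ infinite, I define the symmetric $\mathbb{Q}$-bilinear form $B(\sum_i x_i e_i,\sum_j y_j e_j)=\sum_i x_i y_i$ (a finite sum for each argument) and pull it back to a symmetric biadditive function $A_2(x,y)=B(x\otimes 1,y\otimes 1)$ on $G$. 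Then $f(x)=A_2(x,x)$ is a generalized polynomial of degree at most $2$. Through the bijection $\Hom(G,\C)\cong\Hom_{\mathbb{Q}}(W,\C)$, any finite-rank representation of $A_2$ would yield one of $B$; but $B$ restricts to the identity form on the span of any $N+1$ basis vectors, so it cannot be a sum of fewer than $N+1$ products of functionals for any $N$. Hence $A_2$ has infinite rank and, by the lemma, $f$ is not a polynomial.

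The main obstacle is the finite-rank criterion of the first paragraph together with the uniqueness of the homogeneous decomposition on which it rests: this is where the genuine content lies, since it translates the analytic statement ``generalized polynomial is a polynomial'' into the purely algebraic statement ``the associated multiadditive form has finite rank.'' Once that equivalence is secured, both implications follow from the elementary dichotomy between finite- and infinite-dimensional behavior for the associated bilinear and multiadditive objects, the induction in the \emph{if} part and the diagonal construction in the \emph{only if} part being routine.
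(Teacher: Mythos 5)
Your proof is correct, but note that the paper itself contains no proof of this statement: Theorem~\ref{genpol} is quoted from the literature, with the authors pointing to \cite[Theorems 2 and 3]{MR2167990} and \cite[Theorem 4]{MR2968200}, so there is no internal argument to compare against. Your route --- the unique decomposition $f=\sum_k f_k$ into diagonals of symmetric $k$-additive functions (Djokovi\v c/Mazur--Orlicz, valid here since $\C$ is a $\Q$-vector space, so polarization and division by $k!$ are available), the finite-rank criterion for when a monomial is a polynomial, the induction on $k$ via freezing variables for the \emph{if} direction, and the contrapositive construction of an infinite-rank symmetric biadditive form via $\Hom(G,\C)\cong\Hom_{\Q}(G\otimes\Q,\C)$ for the \emph{only if} direction --- is essentially the standard argument in the cited sources, and all the steps check out: the identification of coefficient functions $c_i$ as $(k-1)$-additive does follow from linear independence of the basis of $\Hom(G,\C)$; the pullback form $A_2$ agrees with $B$ on a $\Q$-spanning set, so a finite-rank representation of $A_2$ does transfer to $B$; and the rank bound via the identity matrix on the span of $N+1$ basis vectors (a sum of $N$ rank-one matrices cannot equal $I_{N+1}$) is sound. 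The only point deserving explicit care, which you correctly flag, is the uniqueness of the monomial decomposition underlying your lemma: uniqueness holds at the level of the multiadditive functions themselves (extract $A_n$ by applying $\Delta_y^{n}$ and dividing by $n!$), which is exactly what licenses the identification of $A_k$ with the polarized degree-$k$ part of $P$. So your proposal supplies a complete, self-contained proof of a result the paper merely cites, along the same conceptual lines as the cited proofs.
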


If $G$ is finitely generated, then it is easy to see that every generalized polynomial on $G$ is a polynomial (see e.g. \cite[Theorem 2. and Theorem 3.]{MR2167990}). 
\vskip.3cm

In \cite{Lacz13} M.~Laczkovich introduced the concept of local polynomials. A function $f:G\to\C$ is called a {\it local polynomial}, if its restriction to every finitely generated subgroup is a polynomial. By the previous remark, every generalized polynomial is a local polynomial, however, as it is shown in \cite{Lacz13}, there are local polynomials, which are not generalized polynomials.

\section{A characterization of local polynomials}

In this section we characterize local polynomials by a "local" version of the functional equation \eqref{Frech2}.

\begin{theorem} \label{TLP}
Let $G$ be an Abelian group. The function $f:G\to\C$ is a local polynomial if and only if for each positive integer $t$, and elements $g_1,g_2,\dots,g_t$ in $G$ there are natural numbers $n_i$ such that
\begin{equation}\label{Frech3}
\Delta_{g_i}^{n_i+1}*f(x)=0\,
\end{equation}
holds for $i=1,2,\dots,t$ and for all $x$ in the subgroup generated by the $g_i$'s.
\end{theorem}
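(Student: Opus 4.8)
The plan is to prove both implications, the substance lying entirely in the "if" direction. Throughout I work inside a fixed finitely generated subgroup $H$, its group algebra $\C H$, and the $\C H$-module $\mathcal C(H)$, applied to the restriction $f|_H$.

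For the "only if" direction I would fix $g_1,\dots,g_t$ and let $H$ be the subgroup they generate. Since $f$ is a local polynomial, $f|_H$ is a polynomial, hence a generalized polynomial of some degree $d$; by Djokovi\v c's equivalence of \eqref{Frech1} and \eqref{Frech2} this yields $\Delta_g^{d+1}*f|_H=0$ for every $g$ in $H$. Choosing $n_i=d$ and observing that for $x$ in $H$ the quantity $\Delta_{g_i}^{d+1}*f(x)$ uses only the values $f(x),f(x+g_i),\dots,f(x+(d+1)g_i)$, all lying in $H$, equation \eqref{Frech3} follows at once.

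For the converse, fix a finitely generated subgroup $H$, choose generators $g_1,\dots,g_t$, and take the $n_i$ provided by the hypothesis, so that $\Delta_{g_i}^{n_i+1}*f|_H=0$. Put $N=n_1+\dots+n_t$. My goal is to show that $f|_H$ is a generalized polynomial of degree at most $N$, that is, $\Delta_{h_1}*\cdots*\Delta_{h_{N+1}}*f|_H=0$ for all $h_j$ in $H$; once this is established, finite generation gives $\dim\Hom(H,\C)<\infty$, and Theorem~\ref{genpol} upgrades "generalized polynomial" to "polynomial," whence $f|_H$ is a polynomial and $f$ is a local polynomial. The engine is the augmentation ideal $M\subseteq\C H$, the kernel of $\mu\mapsto\sum_{x}\mu(x)$. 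I would use two facts. First, $M$ is spanned by the differences $\Delta_h$ with $h$ in $H$, so $M^{N+1}$ is spanned by the products $\Delta_{h_1}*\cdots*\Delta_{h_{N+1}}$, and $f|_H$ is a generalized polynomial of degree at most $N$ exactly when $M^{N+1}*f|_H=0$. Second — the key algebraic step — because $g_1,\dots,g_t$ generate $H$, the elements $\Delta_{g_1},\dots,\Delta_{g_t}$ generate $M$ as an ideal of the commutative algebra $\C H$; this reduction rests on the identities $\delta_{a+b}-\delta_0=\delta_a*(\delta_b-\delta_0)+(\delta_a-\delta_0)$ and $\delta_{-a}-\delta_0=-\delta_{-a}*(\delta_a-\delta_0)$, which express any $\Delta_h$ through the generators.

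Granting these, I would write $M^{N+1}=\sum \Delta_{g_{i_1}}*\cdots*\Delta_{g_{i_{N+1}}}*\C H$, where each product has $N+1$ factors drawn from $\Delta_{g_1},\dots,\Delta_{g_t}$. By a counting argument some index $i$ must occur at least $n_i+1$ times, since otherwise the number of factors would be at most $n_1+\dots+n_t=N$. Using commutativity to isolate $\Delta_{g_i}^{n_i+1}$ and letting it act on $f|_H$ last, every spanning element of $M^{N+1}$ annihilates $f|_H$, so $M^{N+1}*f|_H=0$, as required. I expect the main obstacle to be precisely the second fact above: verifying carefully that the individual direction-wise equations along a generating set assemble into the full Fr\'echet equation. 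The augmentation-ideal description together with the pigeonhole count on the exponents $n_i$ is what makes this assembly work.
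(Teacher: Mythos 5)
Your proposal is correct, and in the sufficiency direction it takes a genuinely different route from the paper's. The paper works by hand: it writes $y=m_1g_1+\cdots+m_tg_t$, telescopes $\delta_{-y}-\delta_0$ into a sum of $t$ terms, each a unit multiple of some $\delta_{-g_i}^{m_i}-\delta_0$, expands the $(N+1)$-th power by the multinomial theorem with $N=n_1+\cdots+n_t+t-1$, and kills each term by extracting a factor $\Delta_{\pm g_i}^{n_i+1}$, having first derived the mirror equations \eqref{locmin} to handle negative $m_i$; this yields \eqref{Frech2} on $H$ for every $y$ in $H$. You package the same mechanism ideal-theoretically: your two identities (which are exactly what the paper's telescoping and sign manipulation accomplish concretely) show that $\Delta_{g_1},\dots,\Delta_{g_t}$ generate the augmentation ideal $M$ of $\C H$, so $M^{N+1}$ is spanned over $\C H$ by degree-$(N+1)$ monomials in the $\Delta_{g_i}$, and your pigeonhole on the exponents isolates a factor $\Delta_{g_i}^{n_i+1}$ annihilating $f|_H$. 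This buys three things: you reach the full Fr\'echet equation \eqref{Frech1} on $H$ directly, since $M^{N+1}$ is linearly spanned by products of $N+1$ differences (the paper instead obtains \eqref{Frech2} and passes to ``generalized polynomial'' via Djokovi\v{c}'s equivalence); the bookkeeping of negative multiples is absorbed once and for all into the ideal-generation lemma; and your bound $N=n_1+\cdots+n_t$ is sharper than the paper's $n_1+\cdots+n_t+t-1$ --- indeed the paper's own pigeonhole (if every $\alpha_i\leq n_i$ then $\sum\alpha_i\leq n_1+\cdots+n_t$) already works with the smaller $N$, so its choice carries slack of $t-1$, which even propagates into the degree bound of Corollary \ref{cor_montel}. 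What the paper's explicit computation buys in return is that it transfers essentially verbatim to the commutative semigroup setting mentioned after the proof, where $\C H$ lacks the units $\delta_{\pm a}$ used in your identity $\delta_{-a}-\delta_0=-\delta_{-a}*(\delta_a-\delta_0)$; though even there your argument survives on the first identity alone once the $m_i$ are positive. Your necessity direction and the final descent from generalized polynomial to polynomial on the finitely generated $H$ (via $\dim\Hom(H,\C)<\infty$ and Theorem \ref{genpol}) coincide with the paper's.
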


\begin{proof}
The necessity of the given condition is obvious. Indeed, if $H$ is the subgroup generated by the $g_i$'s, then the restriction of $f$ to $H$ is a polynomial, hence there is a natural number $n$ such that
\begin{equation*}
\Delta_y^{n+1} f(x)=0
\end{equation*}
holds for each $x,y$ in $H$. Taking $n_i=n$ and $y=g_i$ for $i=1,2,\dots,t$ we get \eqref{Frech3}.
\vskip.3cm

Suppose now that the condition of the theorem is satisfied and let $H$ be the subgroup of $G$ generated by the elements $g_1,g_2,\dots,g_t$, where $t$ is a positive integer. By assumption, there are natural numbers $n_1,n_2,\dots,n_t$ such that 
\begin{equation}\label{loc}
\Delta_{g_i}^{n_i+1}* f(x)=0
\end{equation}
holds for each $x$ in $H$ and for $i=1,2,\dots,t$. Let $N=n_1+n_2+\dots+n_t+t-1$. We show that
\begin{equation}\label{Frech4}
\Delta_y^{N+1}*f(x)=0
\end{equation}
holds for each $x,y$ in $H$.
\vskip.3cm

By \eqref{loc}, we have
\begin{equation}
(\delta_{-g_i}-\delta_0)^{n_i+1}*f=0
\end{equation}
on $H$ for $i=1,2,\dots,t$. Observe that we also have
\begin{equation}\label{locmin}
\Delta_{-g_i}^{n_i+1}* f(x)=0
\end{equation}
for each $x$ in $H$ and $i=1,2,\dots,t$. Indeed, this follows from the obvious identity $$
\delta_{y}-\delta_0=-\delta_y(\delta_{-y}-\delta_0)\,,
$$ 
whenever $y$ is in $G$. Keeping this in mind, in the computation below we shall use the notation $\delta_g^{-m}=\delta_{-g}^m$ for each $g$ in $G$ and positive integer $m$. Let $y$ be in $H$ arbitrary, then there exist nonzero integers $m_1,m_2,\dots,m_t$ such that we have
\begin{equation}\label{fingen}
y=m_1 g_1+m_2 g_2+\dots+m_t g_t\,.
\end{equation}
\vskip.3cm

It follows
\begin{eqnarray*}
&\ & \Delta_y^{N+1}*f =  (\delta_{-y}-\delta_0)^{N+1} *f = (\delta_{-(m_1 g_1+\dots+m_t g_t)}-\delta_0)^{N+1}* f\\
&=& (\delta_{-g_1}^{m_1}\delta_{-g_2}^{m_2}\dots \delta_{-g_t}^{m_t}-\delta_0)^{N+1}* f\\
&=& \bigl[(\delta_{-g_1}^{m_1}\delta_{-g_2}^{m_2}\dots \delta_{-g_t}^{m_t}-\delta_{-g_2}^{m_2}\dots \delta_{-g_t}^{m_t}) 
+ (\delta_{-g_2}^{m_2}\dots \delta_{-g_t}^{m_t}-\delta_{-g_3}^{m_3}\dots \delta_{-g_t}^{m_t}) \\
&\ & \ + (\delta_{-g_3}^{m_3}\delta_{-g_4}^{m_4}\dots \delta_{-g_t}^{m_t}-\delta_{-g_4}^{m_4}\dots \delta_{-g_t}^{m_t})+
(\delta_{-g_4}^{m_4}\dots \delta_{-g_t}^{m_t}-\delta_{-g_5}^{m_5}\dots \delta_{-g_t}^{m_t}) \\
&\ & \  \cdots \\ 
&\ & \ + (\delta_{-g_{t-2}}^{m_{t-2}}\delta_{-g_{t-1}}^{m_{t-1}} \delta_{-g_t}^{m_t}-\delta_{-g_{t-1}}^{m_{t-1}}\delta_{-g_t}^{m_t})+
(\delta_{-g_{t-1}}^{m_{t-1}}\delta_{-g_t}^{m_t}- \delta_{-g_t}^{m_t})+(\delta_{-g_t}^{m_t}-\delta_0)\bigr]^{N+1} *f \\
&=& \bigl[(\delta_{-g_1}^{m_1}-\delta_0)\delta_{-g_2}^{m_2}\dots \delta_{-g_t}^{m_t}+(\delta_{-g_2}^{m_2}-\delta_0)\delta_{-g_3}^{m_3}\dots \delta_{-g_t}^{m_t}\\
&\ & \ \ +\dots+(\delta_{-g_{t-1}}^{m_{t-1}}-\delta_0)\delta_{-g_t}^{m_t}+ (\delta_{-g_t}^{m_t}-\delta_0)\bigr]^{N+1}* f\,.
\end{eqnarray*}

Expanding the $N+1$-th power, by the Multinomial Theorem, we obtain a sum of the form
\begin{equation*}
\sum_{0\leq \alpha_1,\dots,\alpha_t\leq N+1} \frac{(N+1)!}{\alpha_1!\dots \alpha_t!} \prod_{i=1}^t (\delta_{-g_i}^{m_i}-\delta_0)^{\alpha_i} (\delta_{-g_{i+1}}^{m_{i+1}}\dots \delta_{-g_t}^{m_t})^{\alpha_i}f(x)\,,
\end{equation*}
where the sum is also restricted by  $\alpha_1+\alpha_2+\dots+\alpha_t=N+1$, which implies that $\alpha_i\geq n_i+1$ for at least one $1\leq i\leq t$. This implies our statement, as in each term the corresponding $(\delta_{-g_i}^{m_i}-\delta_0)^{\alpha_i}$ factor annihilates $f$, which is clear from
\begin{equation*}
(\delta_{-g_i}^{m_i}-\delta_0)^{\alpha_i}=(\delta_{-g_i}-\delta_0)^{\alpha_i}(\delta_{-g_i}^{m_i-1}+\delta_{-g_i}^{m_i-2}+\dots+\delta_{-g_i}+\delta_0)^{\alpha_i}\,,
\end{equation*}
and the equations \eqref{loc} and \eqref{locmin} (which we use depending on the sign of $m_i$). 
\vskip.3cm

Consequently, equation \eqref{Frech4} holds, which implies that the restriction of $f$ to $H$ is a generalized polynomial. However, on finitely generated Abelian groups every generalized polynomial is a polynomial, hence our theorem is proved.
\end{proof}

We note that the same proof works for a similar statement on commutative semigroups, if the definition of convolution is modified to
\begin{equation*}
f*\mu(x)=\sum_{y\in G} f(x+y) \mu(y)
\end{equation*}
for each $x$ in $G$ with the agreement
\begin{equation*}
\delta_o*f=f
\end{equation*}
for each function $f$.
In that case in \eqref{fingen} the integers $m_i$ are positive.

\section{Connection with Montel--type theorems}
An important contribution to the theory of polynomials is due to P.~Montel. In 1937 in his paper  \cite{Mon37} he proved a surprising result in connection with Fr\'{e}chet's functional equation \eqref{Frech1}. He decided not to focus on the usual regularity approach, that is, assuming some weak smoothness of the generalized polynomial $f$ in order to  conclude that 
$f$ must be an ordinary polynomial. He assumed, instead, that $f$ is a continuous function, and he asked how many steps $h_k$ are necessary to conclude that if 
\begin{equation}\label{Mon1}
\Delta_{h_k}^{n+1}*f(x)=0
\end{equation} 
holds for each $x$ in $\R^d$, then $f$ is an ordinary polynomial. More precisely, he proved the following result.
\begin{theorem}(Montel)
Assume that the additive subgroup of $\mathbb{R}^d$ generated by the vectors $\{h_1,\cdots,h_t\}$ is dense in $\mathbb{R}^d$, further $f:\mathbb{R}^d\to\mathbb{R}$ is continuous and satisfies  \eqref{Mon1} for all $x$ in $\mathbb{R}^d$ and $k=1,\cdots,t$. Then $f$ is an ordinary polynomial. 
\end{theorem}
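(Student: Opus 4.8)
The plan is to convert the finitely many hypotheses \eqref{Mon1} into a single Fr\'echet equation whose order does not depend on the chosen increment, and then to push this equation from the dense subgroup out to all of $\mathbb{R}^d$ by continuity. Write $H$ for the subgroup generated by $h_1,\dots,h_t$. Since \eqref{Mon1} holds with the common exponent $n+1$ for every $k$, I would take $n_i=n$ for all $i$ and set $N=t(n+1)-1$, exactly as in the proof of Theorem \ref{TLP}. For an arbitrary $y\in H$ I would write $y=m_1h_1+\dots+m_th_t$ with integers $m_i$ and run the very same multinomial expansion: each term of $\Delta_y^{N+1}*f$ carries a factor $(\delta_{-h_i}^{m_i}-\delta_0)^{\alpha_i}$ with $\alpha_1+\dots+\alpha_t=N+1$, forcing $\alpha_i\ge n+1$ for some $i$, and such a factor annihilates $f$ by \eqref{Mon1} (using the sign-handling identity from that proof when $m_i<0$). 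Because \eqref{Mon1} is assumed on the whole of $\mathbb{R}^d$ rather than merely on $H$, the conclusion strengthens to
\begin{equation*}
\Delta_y^{N+1}*f(x)=0\qquad\text{for all }x\in\mathbb{R}^d\text{ and all }y\in H,
\end{equation*}
and the decisive point is that the order $N+1=t(n+1)$ is a fixed integer independent of $y$.

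Next I would bring in continuity. For each fixed $x$ the expression unfolds as
\begin{equation*}
\Delta_y^{N+1}*f(x)=\sum_{j=0}^{N+1}\binom{N+1}{j}(-1)^{N+1-j}f(x+jy),
\end{equation*}
which is a continuous function of $y$ since $f$ is continuous. It vanishes on $H$, hence on the closure of $H$; as $H$ is dense in $\mathbb{R}^d$ by hypothesis, it vanishes everywhere. Thus $\Delta_y^{N+1}*f(x)=0$ for all $x,y\in\mathbb{R}^d$, so $f$ is a continuous generalized polynomial of degree at most $N$ on $\mathbb{R}^d$.

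It then remains to invoke the classical fact recalled in the introduction that on $\mathbb{R}^d$ every continuous generalized polynomial is an ordinary polynomial, which finishes the proof. The one place demanding care is the first step: one must verify that the combinatorial identity of Theorem \ref{TLP} produces a degree bound $N$ that depends only on $t$ and $n$, and not on the (a priori unbounded) coefficients $m_i$ in the representation of $y$. Once this uniformity in the order is in hand, the density--continuity passage and the reduction to ordinary polynomials are both immediate.
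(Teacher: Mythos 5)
Your proposal is correct and takes essentially the same route as the paper, which obtains this theorem from the multinomial/telescoping expansion of Theorem \ref{TLP} (giving the same uniform order $N+1=t(n+1)$, independent of the coefficients $m_i$) followed by the density--continuity extension and the classical regularity fact, as carried out in Corollary \ref{cor_montel} specialized to $V=\mathbb{R}^d$. The only cosmetic difference is that you extend by continuity in the increment $y$ alone, exploiting that \eqref{Mon1} holds on all of $\mathbb{R}^d$, whereas the paper first restricts to the subgroup and then extends \eqref{gen} in both variables --- an immaterial variation.
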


We remark that the total degree of $f$ may be greater than $n$.
Although Montel's paper appeared in 1937, he had proved the result already in 1935 and, in fact, he gave a talk in Cluj Napoca, Romania, on this subject at that time. The talk was organized by his Ph. D. student,  T.~Popoviciu, who published an improvement of Montel's result in 1936,  prior to its appearance, for the  case $d=1$. In fact, he proved that if $f:\mathbb{R}\to \mathbb{R}$ is a generalized polynomial of degree at most $n$, and  $f$ is continuous at $n+1$ points, then $f$ is an ordinary polynomial of degree at most $n$. Later on  Almira in \cite{Alm14} and Almira and Abu-Helaiel in \cite{AlAb13} applied a completely different approach, using some tools from the theory of translation invariant subspaces, to prove Montel's theorem in several variables not only for continuous functions but also for distributions.
\vskip.3cm

In fact, in the previous section we proved the following Montel--type theorem.

\begin{theorem}\label{Mont1}
Let $G$ be an Abelian group generated by the elements $g_1,g_2,\dots,g_t$. Then $f:G\to\C$ is a polynomial if and only if there are natural numbers $n_1,n_2,\dots,n_t$ such that we have
\begin{equation}\label{Frech5}
\Delta_{g_i}^{n_i+1}*f=0
\end{equation} 
for $i=1,2,\dots,t$.
\end{theorem}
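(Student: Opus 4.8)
The plan is to obtain this theorem as essentially a specialization of the argument already carried out in the proof of Theorem~\ref{TLP}, exploiting the fact that here the subgroup generated by $g_1,g_2,\dots,g_t$ is the whole of $G$.

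For the necessity, I would begin from the observation that a polynomial is, by definition, a generalized polynomial, so there is a natural number $d$ with $\Delta_y^{d+1}*f=0$ for every $y$ in $G$; this is the Djokovi\v c equivalence between \eqref{Frech1} and \eqref{Frech2} applied to the degree of $f$. Choosing $n_i=d$ for each $i$ and specializing $y=g_i$ then yields \eqref{Frech5} at once.

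For the sufficiency, the key point is that since $g_1,g_2,\dots,g_t$ generate $G$, every $y$ in $G$ has the form $y=m_1g_1+\dots+m_tg_t$ with integers $m_i$, which is precisely the situation \eqref{fingen} occurring in the proof of Theorem~\ref{TLP}. I would therefore set $N=n_1+n_2+\dots+n_t+t-1$ and repeat the same telescoping decomposition of $\delta_{-y}-\delta_0$ followed by the multinomial expansion of its $(N+1)$-th power. Since $\alpha_1+\alpha_2+\dots+\alpha_t=N+1$ forces $\alpha_i\ge n_i+1$ for at least one index $i$, and since the factor $(\delta_{-g_i}^{m_i}-\delta_0)^{\alpha_i}$ is divisible by $(\delta_{-g_i}-\delta_0)^{\alpha_i}=\Delta_{g_i}^{\alpha_i}$, every term of the expansion is annihilated by \eqref{Frech5} (invoking \eqref{locmin} instead of \eqref{loc} when the relevant $m_i$ is negative). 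Hence $\Delta_y^{N+1}*f=0$ for all $y$ in $G$, so that $f$ is a generalized polynomial.

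The concluding step, which is where the hypothesis that $G$ is generated by finitely many elements is genuinely used, is to invoke that on a finitely generated Abelian group every generalized polynomial is a polynomial, thereby upgrading $\Delta_y^{N+1}*f=0$ to the assertion that $f$ itself is a polynomial. I do not anticipate any serious obstacle here: the combinatorial heart of the matter was already settled in Theorem~\ref{TLP}, and the only thing requiring care is to confirm that the computation there was purely internal to $H=\langle g_1,\dots,g_t\rangle$, so that replacing $H$ by $G$ and using the globally valid equations \eqref{Frech5} is entirely legitimate.
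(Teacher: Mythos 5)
Your proposal is correct and takes essentially the same route as the paper: the authors present Theorem~\ref{Mont1} as an immediate consequence of the proof of Theorem~\ref{TLP} (``In fact, in the previous section we proved the following Montel--type theorem''), i.e., the same telescoping decomposition and multinomial expansion applied with $H=G$, followed by the fact that every generalized polynomial on a finitely generated Abelian group is a polynomial. Your necessity argument via the Djokovi\v c equivalence, with $n_i$ taken to be the degree of $f$, likewise matches the paper's (implicit) reasoning.
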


In the subsequent paragraphs we study the relation of Montel--type theorems to local polynomials.
\vskip.3cm

Let $d$ be a positive integer. If $G$ denotes the additive subgroup of $\mathbb{R}^d$ generated by the elements
$\{h_1,\cdots,h_t\}$, then it is well-known \cite[Theorem 3.1]{Wald} that $\overline{G}$, the topological closure of $G$ with the euclidean topology, satisfies  $\overline{G}=V\oplus \Lambda$, where 
$V$ is a vector subspace of $\mathbb{R}^d$ and $\Lambda$ is a discrete additive subgroup of $\mathbb{R}^d$. Furthermore, the case when $G$ is dense in $\mathbb{R}^d$, or, what is the same, the case whenever $V=\mathbb{R}^d$, has been characterized in several different ways (see e.g., \hbox{\cite[Proposition 4.3]{Wald}). } The following theorem is obvious.

\begin{theorem} Assume that $f:\mathbb{R}^d\to\mathbb{R}$ is continuous and its restriction to some dense additive
subgroup of $\mathbb{R}^d$ is a generalized polynomial. Then  $f$ is an ordinary polynomial.
\end{theorem}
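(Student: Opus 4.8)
The plan is to use continuity to upgrade the Fréchet equation from the dense subgroup to all of $\mathbb{R}^d$, and then to invoke the classical fact, recalled in the Introduction, that a continuous generalized polynomial on $\mathbb{R}^d$ is an ordinary polynomial. Denote by $G$ the dense additive subgroup on which $f$ restricts to a generalized polynomial. Since such a restriction has, by definition, a finite degree, the equivalence of \eqref{Frech1} and \eqref{Frech2} established by Djoković provides a natural number $n$ for which
\begin{equation*}
\Delta_y^{n+1}*f(x)=0
\end{equation*}
holds for all $x,y$ in $G$.

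Next I would regard the left hand side as a single function of the two variables $x$ and $y$. Expanding the convolution power gives
\begin{equation*}
\Delta_y^{n+1}*f(x)=\sum_{k=0}^{n+1}\binom{n+1}{k}(-1)^{n+1-k}f(x+ky)\,,
\end{equation*}
so the map $F:\mathbb{R}^d\times\mathbb{R}^d\to\mathbb{R}$ defined by $F(x,y)=\Delta_y^{n+1}*f(x)$ is a finite linear combination of translates of the continuous function $f$, and is therefore jointly continuous. Because $G$ is dense in $\mathbb{R}^d$, the product $G\times G$ is dense in $\mathbb{R}^d\times\mathbb{R}^d$; as $F$ vanishes on $G\times G$ and is continuous, it must vanish identically.

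Consequently $\Delta_y^{n+1}*f(x)=0$ for all $x,y$ in $\mathbb{R}^d$, which means that $f$ itself is a generalized polynomial of degree at most $n$ on all of $\mathbb{R}^d$. Being continuous, $f$ is then an ordinary polynomial by the classical result recalled in the Introduction (see \cite{MR1113488}), which completes the argument. The only point requiring care is the passage from $G\times G$ to $\mathbb{R}^d\times\mathbb{R}^d$, but this is immediate once the difference operator is viewed as a continuous function of $(x,y)$ vanishing on a dense set; there is no genuine obstacle here, which is precisely why the statement is called obvious.
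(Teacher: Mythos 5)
Your proof is correct and matches the argument the paper intends: the paper declares this theorem obvious precisely because one passes from the dense subgroup to all of $\mathbb{R}^d$ by continuity of $(x,y)\mapsto\Delta_y^{n+1}*f(x)$ (the same continuity--density step appears explicitly in the paper's proof of Corollary \ref{cor_montel}) and then invokes the classical fact that continuous generalized polynomials on $\mathbb{R}^d$ are ordinary polynomials. The only stylistic remark is that your appeal to Djokovi\v c's theorem is unnecessary, since the direction you need, from \eqref{Frech1} to \eqref{Frech2}, is immediate by taking all increments equal.
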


\begin{corollary}\label{cor_montel}(Montel's type theorem in several variables) 
Let $t$ be a positive integer, let $n_1,n_2,\dots,n_t$ be natural numbers, further let $f:\mathbb{R}^d\to\mathbb{R}$ be a continuous function satisfying 
\[
\Delta_{h_k}^{n_k+1}f(x)=0
\]
for all  $x$ in $\mathbb{R}^d$ and for $k=1,\cdots,t$. If the subgroup $G$ in $\R^d$ generated by $\{h_1,h_2,\dots,h_t\}$ satisfies $\overline{G}=V\oplus \Lambda$, where $V$ is a vector subspace of $\mathbb{R}^d$, and $\Lambda$ is a discrete additive subgroup of $\mathbb{R}^d$, then there exist ordinary polynomials $p_{\lambda}:\R^d\to\R$ for each $\lambda$ in $\Lambda$
such that 
\[
f(x+\lambda)=p_{\lambda}(x)
\]
holds, whenever $x$ is in $V$ and $\lambda$ is in $\Lambda$. Moreover, we have
\begin{equation*}
\deg p_{\lambda}\leq  n_1+n_2+\dots+n_t+t-1
\end{equation*}
for each $\lambda$ in $\Lambda$. In particular, if $V=\mathbb{R}^d$, then $f$ is an ordinary polynomial. Finally, if $d=1$ and $V=\mathbb{R}$, then $f$ is an ordinary polynomial of degree at most $\min\{n_k:k=1,\cdots,t\}$. 
\end{corollary}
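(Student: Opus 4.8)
The plan is to restrict $f$ to the subgroup $G=\langle h_1,\dots,h_t\rangle$, use the previous section to control $f|_G$, and then reconstruct $f$ on the closure $\overline G=V\oplus\Lambda$ by a density-and-continuity argument carried out one coset of $V$ at a time. First I would apply Theorem~\ref{Mont1} to $G$: since the hypotheses $\Delta_{h_k}^{n_k+1}*f=0$ hold on all of $\R^d$, they hold in particular on $G$, so $f|_G$ is a polynomial on $G$. In fact the proof of Theorem~\ref{TLP} gives the quantitative statement that $\Delta_y^{N+1}*f(x)=0$ for all $x,y\in G$, where $N=n_1+n_2+\dots+n_t+t-1$; thus $f|_G$ is a generalized polynomial of degree at most $N$. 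Since $N$ is precisely the bound that must appear in the conclusion, I would carry it along throughout.

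The key structural step exploits the decomposition $\overline G=V\oplus\Lambda$. Choosing a complementary subspace $W\supseteq\Lambda$ with $\R^d=V\oplus W$, the continuous linear projection $P_W$ onto $W$ satisfies $P_W(\overline G)=\Lambda$ and restricts to the projection $\pi_\Lambda$ onto the $\Lambda$-factor. From continuity of $P_W$ and density of $G$ in $\overline G$ I would deduce two facts. First, $\pi_\Lambda(G)$ is dense in $\Lambda$, hence (as $\Lambda$ is discrete) equals $\Lambda$; so for each $\lambda\in\Lambda$ there is $g_\lambda\in G$ with $\pi_\Lambda(g_\lambda)=\lambda$, say $g_\lambda=v_\lambda+\lambda$ with $v_\lambda\in V$. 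Second, the subgroup $G_0:=G\cap V$ is dense in $V$: given $v\in V$, density of $G$ produces $g\in G$ arbitrarily close to $v$, and for $g$ close enough $P_W(g)$ lies near $P_W(v)=0$, hence $P_W(g)=0$ by discreteness of $\Lambda$, so $g\in G_0$. I expect this topological step to be the main obstacle, since it is where the shape of $\overline G$ really enters; the rest is bookkeeping.

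With this in hand I would treat each coset $V+\lambda$ separately. Define $F_\lambda\colon V\to\R$ by $F_\lambda(v)=f(v+g_\lambda)$. It is continuous, and for $v\in G_0$ one has $v+g_\lambda\in G$, so $F_\lambda|_{G_0}$ is a translate of $f|_G$ restricted to the subgroup $G_0$. Since the difference operators are translation invariant, $\Delta_y^{N+1}F_\lambda=0$ on $G_0$ for every $y\in G_0$, i.e.\ $F_\lambda|_{G_0}$ is a generalized polynomial of degree at most $N$. Now I would invoke the theorem immediately preceding this corollary, applied to the Euclidean space $V$ with the dense subgroup $G_0$: a continuous function whose restriction to a dense subgroup is a generalized polynomial is an ordinary polynomial. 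Hence $F_\lambda$ is an ordinary polynomial on $V$, of degree at most $N$, because $\Delta_y^{N+1}F_\lambda\equiv0$ for $y$ in the spanning set $G_0$ and this identity is polynomial in $y$, so it persists for all $y\in V$. Setting $p_\lambda(x)=F_\lambda(x-v_\lambda)$ and extending it to an ordinary polynomial on $\R^d$ yields $f(x+\lambda)=p_\lambda(x)$ for $x\in V$ with $\deg p_\lambda\le N$ (the case $\lambda=0$ corresponding to $g_0=0$, $p_0=f|_V$).

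Finally I would dispose of the two special cases. If $V=\R^d$, then $\overline G=\R^d$ forces $\Lambda=\{0\}$, so the single polynomial $p_0$ equals $f$ everywhere and $f$ is an ordinary polynomial. For $d=1$ and $V=\R$, $f$ is already an ordinary polynomial by the previous case; to sharpen the degree I would use that for $h\neq0$ the operator $\Delta_h$ lowers the degree of a one-variable polynomial by exactly one, the leading coefficient of $\Delta_h f$ being $D\,h$ times that of $f$, where $D=\deg f$. Thus $\Delta_{h_k}^{n_k+1}f=0$ with $h_k\neq0$ forces $\deg f\le n_k$, and taking the smallest such bound gives $\deg f\le\min\{n_k:k=1,\dots,t\}$.
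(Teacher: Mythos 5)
Your argument is correct and reaches the same conclusions with the same degree bound $N=n_1+\cdots+n_t+t-1$, but it executes the central step differently from the paper. The paper first extends the relation $\Delta_h^{N+1}*f=0$ from $G$ to all of $\overline{G}=V\oplus\Lambda$ by continuity, handles the coset $V+\lambda$ via the honest translate $x\mapsto f(x+\lambda)$ with $\lambda\in\Lambda$ (which lies in $\overline{G}$ but generally not in $G$), and then upgrades the continuous generalized polynomial on $V$ to an ordinary polynomial by an explicit construction: it extends $f_{|V}$ to $F(v+w)=f(v)$ along $W=V^{\perp}$ and verifies Fr\'echet's equation for $F$ on all of $\R^d$ by a computation with the orthogonal projection $P_V$. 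You avoid both steps: you prove the two density facts that $G\cap V$ is dense in $V$ and that $G$ surjects onto $\Lambda$ (both via discreteness of $\Lambda$, and both correct), you translate by an element $g_\lambda\in G$ lying over $\lambda$ so that every difference computation stays inside $G$, and you then invoke the paper's preceding ``obvious'' theorem on the Euclidean space $V$ with the dense subgroup $G\cap V$ instead of redoing the extension computation. Your route buys a cleaner logical structure --- the functional equation is never needed on $\overline{G}$, only on $G$ --- at the price of the density lemmas; the paper's route needs no fine structure of $G$ beyond the given decomposition but must build the extension $F$ by hand. One point you assert without proof: a linear complement $W$ of $V$ with $\Lambda\subseteq W$ exists only because $\mathrm{span}(\Lambda)\cap V=\{0\}$; this does follow from the hypotheses (the sum is direct and $\overline{G}$ is closed, so $\Lambda$ maps isomorphically onto a closed, hence discrete, subgroup of $\R^d/V$, whose generators are $\R$-linearly independent), but it deserves a line. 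Finally, in the case $d=1$, $V=\R$, your elementary leading-coefficient argument is a self-contained replacement for the paper's citation of \cite[Corollary 1]{Alm14}; note that both you and the paper tacitly assume that the step $h_k$ realizing $\min\{n_k\}$ is nonzero, without which that sharpened bound would fail.
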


\begin{proof} Let $N=n_1+n_2+\dots+n_t+t-1$. It follows from Theorem \ref{TLP}, when applied to $f_{|G}$, the restriction of $f$ to $G$,  that  
\[
\Delta_h^{N+1}*f(x)=0
\]
for each $x,h$ in $G$. Hence, the continuity of $f$ implies that 
\begin{equation} \label{gen}
\Delta_h^{N+1}*f(x)=0
\end{equation}
for each $x,h$ in $\overline{G}=V\oplus \Lambda$. Consequently,
 $f_{|V}$ is a continuous solution of the functional equation \eqref{Frech2} on $V$. Let $W=V^{\perp}$ denote the orthogonal complement of $V$ in 
$\mathbb{R}^d$ with respect to the standard scalar product. We define the function $F:\R^d\to \R$ by
\[
F(v+w)=f(v)\,,
\]
whenever $v$ is in $V$ and $w$ is in $W$.
Obviously, $F$ is a continuous extension of $f_{|V}$. We claim that $F$ satisfies the  functional equation \eqref{Frech2} on $\R^d$.
Indeed, if we denote by $P_V:\mathbb{R}^d\to\mathbb{R}^d$ the orthogonal projection on $V$, then we have
\begin{eqnarray*}
&\ & \Delta_h^{N+1}*F(x) = \sum_{k=0}^{N+1}\binom{N+1}{k}(-1)^{N+1-k}F(x+kh) \\
&=&  \sum_{k=0}^{N+1}\binom{N+1}{k}(-1)^{N+1-k}F(P_V(x)+kP_V(h)+[(x-P_V(x))+k(h-P_V(h))]) \\ 
& = & \sum_{k=0}^{N+1}\binom{N+1}{k}(-1)^{N+1-k}f(P_V(x)+kP_V(h)) \\
 &=& \Delta_{P_V(h)}^{N+1}*f(P_V(x)) =0. 
\end{eqnarray*}
This implies that $F$ is an ordinary polynomial, whose restriction to $V$ is $f_{|V}$. Thus, if we set $p_0=F$, then we have that $p_0$ is an ordinary polynomial and $f(x)=p_0(x)$ for all $x$ in $V$.
\vskip.3cm

Now let $\lambda$ be arbitrary in $\Lambda$ and we consider the function $g_{\lambda}:V\to\R$ defined by $g_{\lambda}(x)=f(x+\lambda)$ for $x$ in $V$ and $\lambda$ in $\Lambda$. Then equation \eqref{gen} implies that 
$$
\Delta_h^{N+1}*g_{\lambda}(x)=0
$$ 
holds for all $x,h$ in $V$, and the same arguments we used above to define the function $F$ lead to the conclusion that there exists an ordinary polynomial 
$F_{\lambda}:\mathbb{R}^d\to\mathbb{R}$ such that $F_{\lambda}(x)=g_{\lambda}(x)=f(x+\lambda)$ for each $x$ in $V$ and $\lambda$ in $\Lambda$. This proves that  $f(x+\lambda)=p_{\lambda}(x)$ for each $x$ in $V$  with $p_{\lambda}=F_{\lambda}$, which is an ordinary polynomial of degree at most $N$, whenever $\lambda$ is in $\Lambda$.  In particular, if $V=\mathbb{R}^d$ then $f$ is an ordinary polynomial of degree at most $N$. 
\vskip.3cm

Now we assume that $d=1$ and $V=\mathbb{R}$, further let 
$$
m=n_{i_0}=\min\{n_k:k=1,\cdots,t\}\,.
$$ 
Then $f$ is an ordinary polynomial of degree at most $N$, and  $f$ belongs to the annihilator of  $\Delta_h^{m+1}$ with $h=h_{n_{i_0}}\neq 0$. But a simple computation shows that ordinary polynomials which belong to the annihilator of $ \Delta_h^{m+1}$ are ordinary polynomials of degree at most $m$ (see, e.g., \cite[Corollary 1]{Alm14}). The proof is complete. 
\end{proof}

\begin{corollary} \label{local_reg}
Every continuous local polynomial on $\R^d$ is an ordinary polynomial.
\end{corollary}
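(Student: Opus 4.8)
The plan is to reduce the statement to the Theorem stated immediately before Corollary~\ref{cor_montel}, which asserts that a continuous function on $\mathbb{R}^d$ whose restriction to some dense additive subgroup is a generalized polynomial is automatically an ordinary polynomial. To invoke it I only need to exhibit a single \emph{finitely generated} dense subgroup of $\mathbb{R}^d$, because the local polynomial hypothesis controls $f$ precisely on finitely generated subgroups.

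First I would construct such a subgroup explicitly. Choosing irrational numbers $\alpha_1,\dots,\alpha_d$ and letting $e_1,\dots,e_d$ denote the standard basis of $\mathbb{R}^d$, I would take $G$ to be the subgroup generated by the $2d$ elements $e_1,\dots,e_d,\alpha_1 e_1,\dots,\alpha_d e_d$. Then $G=\bigoplus_{j=1}^d(\mathbb{Z}+\alpha_j\mathbb{Z})e_j$, and since each factor $\mathbb{Z}+\alpha_j\mathbb{Z}$ is dense in $\mathbb{R}$ by the elementary Kronecker density argument, the subgroup $G$ is dense in $\mathbb{R}^d$.

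Next I would unwind the hypothesis that $f$ is a local polynomial: by definition its restriction to the finitely generated subgroup $G$ is a polynomial on $G$, and every polynomial is in particular a generalized polynomial. At this point all hypotheses of the preceding Theorem are met, namely $f$ is continuous and $f_{|G}$ is a generalized polynomial on the dense subgroup $G$, so the conclusion that $f$ is an ordinary polynomial follows at once.

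The argument is short, and the only points requiring care are the existence of a finitely generated dense subgroup and the verification that $f$ genuinely restricts to a generalized polynomial there; both are routine, so I do not anticipate a serious obstacle. If one preferred to avoid quoting the preceding Theorem directly, an equivalent route is to apply Theorem~\ref{TLP} to $f_{|G}$ in order to obtain a natural number $N$ with $\Delta_y^{N+1}*f(x)=0$ for all $x,y$ in $G$, to extend this identity to all of $\mathbb{R}^d$ by the continuity of $f$ together with the density of $G$, and then to invoke the classical regularity result that continuous solutions of Fr\'echet's equation \eqref{Frech2} on $\mathbb{R}^d$ are ordinary polynomials.
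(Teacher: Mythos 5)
Your proof is correct and takes essentially the same route as the paper: the paper likewise fixes a finitely generated dense subgroup $G$ of $\mathbb{R}^d$, uses the local polynomial hypothesis to make $f_{|G}$ a polynomial, and concludes via Corollary~\ref{cor_montel} with $V=\mathbb{R}^d$ (your invocation of the unnumbered Theorem preceding it, or equivalently Theorem~\ref{TLP} plus density and the classical regularity of continuous solutions of \eqref{Frech2}, is the same argument). Your only genuine addition is a welcome one: you explicitly construct a finitely generated dense subgroup, namely $\bigoplus_{j=1}^d(\mathbb{Z}+\alpha_j\mathbb{Z})e_j$ with $\alpha_j$ irrational, whereas the paper tacitly assumes such generators $h_1,\dots,h_t$ are given.
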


\begin{proof}
Suppose that the subgroup $G$ of $\R^d$ generated by $h_1,h_2,\dots,h_t$ is dense in $\mathbb{R}^d$. By the definition of local polynomials, $f$ is a polynomial over $G$. This implies that $f$ satisfies the hypotheses of  Corollary \ref{cor_montel} for the group $G$, with $V=\mathbb{R}^d$. Hence $f$ is an ordinary polynomial.    
\end{proof}

\section{The distributional setting}

We recall that if $f$ is a distribution, then its convolution by $\delta_h$ is defined as 
\[
(\delta_h*f)(\phi)=f(\delta_{-h}*\phi),
\] 
where  $\phi$  is an arbitrary test function. This means that for each $h$ in $\R^d$ and for every natural number $m$ we have
\begin{eqnarray*}
(\Delta_h^{m+1}*f)(\phi) &=&  \left(\sum_{k=0}^{m+1}\binom{m+1}{k}(-1)^{m+1-k}\delta_{kh}\right)*f(\phi) \\
&=& \sum_{k=0}^{m+1}\binom{m+1}{k}(-1)^{m+1-k}f(\delta_{-kh}*\phi)\\
&=&  f\left(\sum_{k=0}^{m+1}\binom{m+1}{k}(-1)^{m+1-k}\delta_{-kh}*\phi\right) \\
&=& f(\Delta_{-h}^{m+1}*\phi)\,.
\end{eqnarray*} 
\vskip.3cm

It is reasonable to introduce the following concepts.  Let $f$ be a complex valued distribution on $\mathbb{R}^d$.  We say that $f$  is a {\it generalized polynomial} of degree at most $n$ {\it in  distributional sense}, if  
$$
\Delta_h^{n+1}*f=0
$$
 for all $h$ in $\mathbb{R}^d$.  We say that $f$ a {\it local polynomial in distributional sense}, if for every finitely generated subgroup $H$ of $\mathbb{R}^d$ there exists a natural number $n$ such that $\Delta_h^{n+1}*f=0$ for each $h$ in $H$.

\begin{corollary} \label{cor_dist}
Let $t$ be a positive integer, let $h_1,h_2,\dots,h_t$ be elements in $\R^d$ and let $n_1,n_2,\dots,n_t$ be natural numbers. Suppose that the complex valued distribution $f$ satisfies 
\begin{equation} \label{poli}
\Delta_{h_k}^{n_k+1}*f=0 
\end{equation}
for $k=1,2,\dots,t$. If the vectors $h_1,h_2,\dots,h_t$ generate a dense subgroup in $\mathbb{R}^d$, then $f$ is an ordinary polynomial of degree at most $n_1+n_2+\cdots+n_t+t-1$.
In particular, generalized polynomials and local polynomials in distributional sense are ordinary polynomials.
\end{corollary}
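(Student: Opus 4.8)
The plan is to reduce the distributional assertion to the continuous case already settled in Corollary~\ref{cor_montel}, by a standard mollification. Set $N=n_1+n_2+\dots+n_t+t-1$, fix a mollifier $\phi\in\mathcal D(\R^d)$ (a nonnegative test function with compact support and $\int_{\R^d}\phi=1$), and for $\varepsilon>0$ write $\phi_\varepsilon(x)=\varepsilon^{-d}\phi(x/\varepsilon)$ and $f_\varepsilon=f*\phi_\varepsilon$. Since $\phi_\varepsilon$ has compact support, $f_\varepsilon$ is a well defined function of class $C^\infty$ on $\R^d$. The first point to verify is that $f_\varepsilon$ inherits the equations \eqref{poli}: because $\Delta_{h_k}^{n_k+1}$ is a finitely supported combination of point masses, convolution is associative here and $\Delta_{h_k}^{n_k+1}*f_\varepsilon=(\Delta_{h_k}^{n_k+1}*f)*\phi_\varepsilon=0$ for $k=1,\dots,t$.

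Thus each $f_\varepsilon$ is a continuous function satisfying the hypotheses of Corollary~\ref{cor_montel}; as the subgroup generated by $h_1,\dots,h_t$ is dense we are in the case $V=\R^d$, and the corollary gives that $f_\varepsilon$ is an ordinary polynomial of degree at most $N$. Now I would let $\varepsilon\to 0^+$. One has $f_\varepsilon\to f$ in $\mathcal D'(\R^d)$, while every $f_\varepsilon$ lies in the space $\mathcal P_N$ of ordinary polynomials of degree at most $N$. Since $\mathcal P_N$ is finite dimensional it is a closed subspace of $\mathcal D'(\R^d)$, so its weak limit $f$ belongs to $\mathcal P_N$ as well; that is, $f$ is an ordinary polynomial of degree at most $N$.

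The two final assertions then follow immediately. A dense finitely generated subgroup of $\R^d$ exists (for instance one generated by $d+1$ suitable vectors, by Kronecker's theorem). If $f$ is a generalized polynomial of degree at most $n$ in distributional sense, then $\Delta_h^{n+1}*f=0$ for every $h$, so \eqref{poli} holds for any generating set of such a dense subgroup and $f$ is an ordinary polynomial. If $f$ is merely a local polynomial in distributional sense, fix one dense finitely generated subgroup $H$; by definition there is an $n$ with $\Delta_h^{n+1}*f=0$ for all $h$ in $H$, in particular for a generating set, and the corollary applies.

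The only genuinely delicate point I anticipate is the associativity step $\Delta_{h_k}^{n_k+1}*(f*\phi_\varepsilon)=(\Delta_{h_k}^{n_k+1}*f)*\phi_\varepsilon$, which rests on the fact that convolution of distributions is associative once all but one of the factors are compactly supported. Should one wish to avoid invoking the continuous Corollary~\ref{cor_montel} as a black box, the same conclusion can be reached intrinsically: the purely algebraic computation in the proof of Theorem~\ref{TLP} is valid in any module over the group algebra, in particular in the $\C[\R^d]$-module of distributions, and yields $\Delta_h^{N+1}*f=0$ first for all $h$ in the dense group generated by the $h_k$, and then, by continuity of $h\mapsto f(\Delta_{-h}^{N+1}*\phi)$ together with density, for every $h$ in $\R^d$; a distribution annihilated by all $\Delta_h^{N+1}$ is then an ordinary polynomial of degree at most $N$.
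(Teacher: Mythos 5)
Your proof is correct, but it takes a genuinely different route from the paper's, which never mollifies. The paper reruns the algebraic computation of Theorem~\ref{TLP} directly on the distribution $f$ (the telescoping decomposition and multinomial expansion are valid in the $\C[\R^d]$-module of distributions) to get $\Delta_h^{N+1}*f=0$ for every $h$ in the dense subgroup $G$, extends this to all $h$ in $\R^d$ by density, and then differentiates: for each coordinate direction it shows $\frac{1}{t^{N+1}}\Delta_{te_k}^{N+1}*f(\phi)\to\pm\,\frac{\partial^{N+1}f}{\partial x_k^{N+1}}(\phi)$ as $t\to 0$, whence every partial derivative of $f$ of order $d(N+1)$ vanishes, so $f$ is an ordinary polynomial, with the degree bound $N$ recovered from $\Delta_h^{N+1}*f=0$ via \cite[Theorem 3.1]{AlAb13}. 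In other words, the ``intrinsic'' fallback you sketch in your final paragraph is essentially the paper's actual proof. Your main argument instead reduces to the continuous case: the associativity step you flag is indeed sound, since $\Delta_{h_k}^{n_k+1}$ is a finite combination of point masses and translation commutes with convolution by a test function, so $f_\varepsilon=f*\phi_\varepsilon$ is a smooth solution of the same difference equations; Corollary~\ref{cor_montel} with $V=\R^d$ (applied to the real and imaginary parts of $f_\varepsilon$, since that corollary is stated for real-valued functions) puts $f_\varepsilon$ in the space $\mathcal{P}_N$ of polynomials of degree at most $N$; and $f_\varepsilon\to f$ in $\mathcal{D}'(\R^d)$ together with the closedness of the finite-dimensional subspace $\mathcal{P}_N$ in the Hausdorff space $\mathcal{D}'(\R^d)$ finishes. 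What each approach buys: yours is softer and shorter, outsourcing the analysis to the continuous theorem plus a general topological-vector-space fact, and avoiding both the density-extension step and the difference-quotient computation; the paper's is self-contained at the distribution level, exhibits the concrete mechanism (difference quotients converging to derivatives in $\mathcal{D}'$), and uses only the dense case rather than the full $V\oplus\Lambda$ machinery behind Corollary~\ref{cor_montel}. Your handling of the two ``in particular'' claims --- exhibiting a dense subgroup of $\R^d$ generated by $d+1$ vectors via Kronecker's theorem and then specializing --- is correct, and in fact more explicit than the paper, which leaves the existence of such a subgroup implicit.
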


\begin{proof} We let $N=n_1+n_2+\dots+n_t+t-1$.
The very same arguments we applied in Theorem \ref{TLP} show that $\Delta_h^{N+1}*f=0$ holds for every $h$ in the subgroup $G$ generated by the vectors $h_k$, and the density of $G$ in $\mathbb{R}^d$ implies that  $\Delta_h^{N+1}*f=0$ for every $h$ in $\mathbb{R}^d$. A simple computation shows that for each test function $\phi$, for each $t$ in $\mathbb{R}\setminus\{0\}$, and for every any $k\leq d$ we have
\begin{equation*}
0 = \frac{1}{t^{N+1}}\Delta_{te_k}^{N+1}*f(\phi)=
 f\bigl(\frac{1}{t^{N+1}}\Delta_{-te_k}^{N+1}*\phi\bigr)=
 \end{equation*}
 \begin{equation*}
(-1)^{N+1}f\Bigl(\frac{1}{(-t)^{N+1}}\Delta_{-te_k}^{N+1}*\phi\Bigr)  \to (-1)^{N+1}f\Bigl(\frac{\partial^{N+1} \phi}{\partial x_k^{N+1}}\Bigr) = \frac{\partial^{N+1} f}{\partial x_k^{N+1}}(\phi)\,,
\end{equation*}
whenever $t$ tends to $0$.
Assume that $\alpha=(\alpha_1,\cdots,\alpha_d)\in \mathbb{N}^d$ satisfies $|\alpha| =d(N+1)$. Then $\max_{1\leq i\leq d}\alpha_i\geq N$ and we infer
\[
\frac{\partial^{d(N+1)}* f}{\partial x_1^{\alpha_1}\cdots \partial x_d^{\alpha_d}}(\phi)=0
\]
for every test function $\phi$. Hence all partial (generalized) derivatives of $f$ of order $d(N+1)$ are zero, which means that $f$ is an ordinary polynomial. Furthermore, we know that 
$\Delta_h^{N+1}*f=0$ holds whenever $h$ is in $\mathbb{R}^d$. 
Consequently, $f$ is an ordinary polynomial with total degree at most
$N$ (see, for example, \cite[Theorem 3.1]{AlAb13}). 
\end{proof}

We remark that Corollary \ref{cor_dist} applies for functions in $L^p(\mathbb{R}^d)$, since these functions are distributions.


\bigskip

\footnotesize{Jose Maria Almira 

Departamento de Matem\'{a}ticas, Universidad de Ja\'{e}n, Spain

E.P.S. Linares,  C/Alfonso X el Sabio, 28

23700 Linares (Ja\'{e}n) Spain

e-mail address: jmalmira@ujaen.es }

\medskip

\footnotesize{L\'{a}szl{\'o} Sz\'{e}kelyhidi

Department of Mathematics, University of Debrecen, Hungary\\\indent Department of Mathematics, University of Botswana, Botswana

P. O. Box 12, Debrecen 4010, Hungary.

e-mail address: lszekelyhidi@gmail.com}

\end{document}